\theoremstyle{plain}
\newtheorem{thm}{Theorem}[section]
\newtheorem{lem}[thm]{Lemma}
\newtheorem{prop}[thm]{Proposition}
\newtheorem{cor}[thm]{Corollary}
\theoremstyle{definition}
\newtheorem{defn}[thm]{Definition}
\newtheorem{ex}[thm]{Example}
\theoremstyle{remark}
\newtheorem{rem}[thm]{Remark}
\title{\textbf{Presentations with negative deficiency and largeness}}
\author{Mariano Zer\'{o}n-Medina Laris}
\begin{document}

\maketitle

\begin{abstract}
We extend the results on balanced presentations and largeness in \cite{Edjvet-balanced} by giving conditions under which presentations with negative deficiency define large groups.
\end{abstract}

\section{Introduction}

A finitely generated group $G$ is large when it has a finite index subgroup that surjects 
onto a non-abelian free group of rank $2$ (\cite{pride large}). This is a strong property 
preserved under finite index subgroups, finite supergroups and pre-quotients. Moreover, 
if a group is large then it contains a non-abelian free subgroup \cite{neumann}, it is 
$SQ$-universal \cite{pride large} (every countable group is a subgroup of a quotient of $G$), 
it has subgroups with arbitrarily large first Betti number \cite{lubotzky}, uniformly 
exponential word growth \cite{harpe}, as well as subgroup growth of strict type $n^{n}$ 
\cite{lubotzky segal}, among other properties.
\vskip 2mm

The deficiency of a finite presentation is defined as the number of generators minus the 
number of relators. The deficiency of a finitely presented group $G$, is defined as 
the supremum over the deficiencies of the finite presentations which define the group $P$.
\vskip 2mm

The deficiency of a group is a numerical invariant linked to largeness. The first result to show this was proved in $1978$ by B. Baumslag and S. Pride proved (\cite{baums-pride}). The theorem says that if a presentation has deficiency greater than one, then it defines a large group. Since then many of the 
results on largeness use such theorem.
\vskip 2mm

Shortly after the publication of \cite{baums-pride}, the question of largeness in 
finitely presented groups with lower deficiency was explored. In \cite{baums-pride 2}, 
presentations with deficiency one and a proper power relator were studied. A condition was 
found under which such a presentation yields a group with a finite index subgroup with 
deficiency greater than one. By \cite{baums-pride}, this means the group is large. This 
condition, however, does not apply to all deficiency one presentations with a proper power 
relator. They nevertheless conjectured that a deficiency one presentation with a proper power 
relator should always define a large group. The conjecture was settled first by 
M. Gromov in \cite{gromov} by considering bounded cohomology and then by R. Stohr 
in \cite{Stohr} using direct algebraic methods. However, the question of whether a 
deficiency one presentation with a proper power relator defines a group which has a 
finite index subgroup with deficiency greater than one, is still open.
\vskip 2mm

Not long after, M. Edjvet proved (\cite{Edjvet-balanced}) that under certain conditions, 
groups defined by balanced presentations (i.e. equal number of generators and relators)
are also large. His proof is divided into two cases. The first deals with groups that 
have finite abelianisation. He proves that if the presentation has at least two proper power relators where one of them has a power greater than two, then the commutator subgroup has deficiency greater than one. The second case considers groups with infinite abelianisation. Here he shows that R. Stohr's main result (\cite{Stohr}) applies and therefore concludes largeness if at least two of the powers have a non-trivial common factor.
\vskip 2mm

In this paper we give conditions under which a presentation with deficiency minus one defines a large group. We follow a similar approach to the one in \cite{Edjvet-balanced} dividing the proof into the same two cases. In the first case, we show, by reducing it down to the deficiency zero case, that under suitable conditions, the commutator of a group given by a deficiency minus one presentation has deficiency greater than one. When the group has infinite abelianisation, we prove that R. Stohr's result can still be used under suitable conditions on the powers of the presentation. The latter result applies to presentations of negative deficiency and not only to those with deficiency minus one.
\vskip 2mm

\textbf{Acknowledgements}
\vskip 2mm

We would like to thank M. Edjvet for suggesting the proof of \cref{lemma for thm}. This work is part of the author's PhD done under the supervision of Jack Button. The author would like to thank Jack for all the support and advice needed for the completion of this work. Also, the author would like to thank the Mexican Council of Science and Technology (CONACyT) and the Cambridge Overseas Trust for their financial support all these years.

\section{Deficiency minus one presentations with finite non-\\trivial abelianisation}

\vskip 2mm

We will use the following notion introduced in \cite{yo}.
\vskip 2mm

\begin{defn}\label{def: residual def}
Let $G$ be a finitely presented group with finite presentation $Q=\langle X|R\rangle$, 
where $X$ freely generates $F_n$, the non-abelian group of rank $n$. 
Let $R=\{u^{s_1}_1,\ldots,u^{s_m}_m\}$, where $u_i$ is the minimal root 
of $u_i^{s_i}$ for all $i$, $1\leq i\leq m$. Suppose the order of $\psi(u_i)$ in the 
residual quotient of $G$ is $k_i$, for all $i$. Then we define the 
\emph{residual deficiency} of the presentation $Q$ to be
\[
rdef(Q)=n-\sum_{i=1}^{m}\dfrac{1}{k_i}.
\]
We define the \emph{residual deficiency} of the group $G$ to be the
supremum  of the residual deficiencies defined by all finite presentations of $G$
\[
rdef(G)=\underset{\langle X|R\rangle \cong G}{\text{sup}} \left\{ rdef(Q) \right\}.
\]
\end{defn}
\vskip 2mm

The residual deficiency helps give a lower bound for the deficiency of finite index subgroups. This is given by the next theorem \cite{yo}.
\vskip 2mm

\begin{thm}$($Theorem $3.4$ in \cite{yo}$)$\label{deficiency for finite index subgroups}\\
Let the group $G$ be given by the presentation
\[
P=\langle x_1,\ldots,x_n \mid u_1^{m_1},\ldots,u_r^{m_r}\rangle,
\]
where $n, m_i\geq 1$ for $i=1,\ldots,r$. Let $k_i$, for $1\leq i\leq r$, be as in \cref{def: residual def}, the order of $\varphi(u_i)$ in $G/R_G$. Then, there are finite index normal subgroups $H$ in $G$, such that the order of $\varphi(u_i)$ in $G/H$ is $k_i$, for all $i$, $1\leq i\leq r$. Moreover, the deficiency of every such $H$ is bounded below by
\[
1+|G:H|(rdef(P)-1).
\]
\end{thm}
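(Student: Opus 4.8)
The plan is to construct the subgroup $H$ using the residual quotient and then compute its deficiency via a Reidemeister-Schreier type argument.

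The plan is to construct the subgroups $H$ by a residual finiteness argument and then to exhibit an efficient presentation of each $H$ via the Reidemeister--Schreier process, the key point being that the lifted relators coming from each proper power $u_i^{m_i}$ collapse in groups of $k_i$.

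First I would produce the subgroups $H$. Write $R_G$ for the finite residual of $G$, so that $G/R_G$ is residually finite and, by hypothesis, $\varphi(u_i)$ has order $k_i$ there; note $k_i$ is finite since $\varphi(u_i)^{m_i}=1$. The finitely many nontrivial elements $\varphi(u_i)^{j}$ with $1\le i\le r$ and $1\le j<k_i$ can each be separated from the identity by a finite-index normal subgroup of $G/R_G$; intersecting these and pulling back to $G$ yields a finite-index normal subgroup $H\trianglelefteq G$ in which the order of $\varphi(u_i)$ is exactly $k_i$ for every $i$. Cofinally many such $H$ exist, so it remains to bound the deficiency of a fixed one. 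Set $d=|G:H|$ and run Reidemeister--Schreier with a Schreier transversal $T$: this presents $H$ with $d(n-1)+1$ generators and $dr$ relators, namely one relator $\tau(t\,u_i^{m_i})$ for each $t\in T$ and each $i$, where $\tau$ denotes the rewriting map and $\gamma(t,u_i)$ the associated Schreier generators.

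Since $H$ is normal and $\varphi(u_i)$ has order $k_i$ in $G/H$, right multiplication by $u_i$ partitions the $d$ cosets into $d/k_i$ orbits of size $k_i$, and $k_i\mid m_i$. Writing $v_{i,t}=\tau(t\,u_i^{k_i})\in H$, the telescoping of the rewriting over an orbit gives $\tau(t\,u_i^{m_i})=v_{i,t}^{\,m_i/k_i}$. The heart of the argument --- and the step I expect to be the main obstacle to state cleanly --- is that the $k_i$ relators attached to a single orbit are mutually redundant. Comparing the two ways of rewriting $t\,u_i^{k_i+1}$ yields the free identity $\gamma(t,u_i)\,v_{i,\overline{t u_i}}=v_{i,t}\,\gamma(t,u_i)$, so $v_{i,\overline{t u_i}}^{\,m_i/k_i}$ is a conjugate, by $\gamma(t,u_i)$, of $v_{i,t}^{\,m_i/k_i}$ in the free group on the Schreier generators. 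Hence, around each orbit, all $k_i$ lifted relators are conjugate to a single one, and $k_i-1$ of them may be deleted by Tietze transformations.

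This leaves a presentation of $H$ with $d(n-1)+1$ generators and $\sum_{i=1}^{r} d/k_i$ relators, whence
\[
\pdeg(H)\ \ge\ d(n-1)+1-\sum_{i=1}^{r}\frac{d}{k_i}\ =\ 1+d\Bigl(n-1-\sum_{i=1}^{r}\frac{1}{k_i}\Bigr)\ =\ 1+|G:H|\bigl(rdef(P)-1\bigr),
\]
as required. The delicate points to verify are that the displayed identity is genuinely an identity of free words, so that the deletions are legitimate Tietze moves rather than mere consequences modulo the remaining relators (this survives the elimination of the trivial tree generators, since setting generators to $1$ carries free identities to free identities), and that the orbit count is exactly $d/k_i$; both follow from normality of $H$ together with the standard multiplicativity of the Reidemeister rewriting.
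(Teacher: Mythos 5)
This paper only quotes the theorem from \cite{yo} (it is Theorem 3.4 there) and contains no proof of it, so there is no internal argument to compare yours against; on its own merits your proof is correct, and it is the standard argument that the cited result rests on. Both halves are sound: residual finiteness of $G/R_G$ together with finiteness of each $k_i$ (it divides $m_i$) yields the subgroups $H$ by separating the finitely many elements $\varphi(u_i)^j$, $1\le j<k_i$; and for any finite-index normal $H$ realising the orders $k_i$, your Reidemeister--Schreier computation --- orbits of size exactly $k_i$ by normality of $H$, telescoping of the rewriting to $(m_i/k_i)$-th powers, and conjugacy in the free group $\varphi^{-1}(H)$ permitting deletion of all but one relator per orbit --- leaves $d(n-1)+1$ generators and $\sum_{i=1}^{r} d/k_i$ relators, which gives precisely the bound $1+|G:H|(rdef(P)-1)$.
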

\vskip 2mm

Given the presentation
\[
P=\langle x_1,\ldots,x_d \mid u^{m_1}_1, \ldots, u^{m_{d+1}}_{d+1}\rangle,
\]
consider the presentation $P_i$ obtained by removing the relator $u_i^{m_i}$ from $P$,
\[
P_i=\langle x_1,\ldots,x_d \mid u^{m_1}_1,\ldots,u^{m_{i-1}}_{i-1},u^{m_{i+1}}_{i+1},\ldots, u^{m_{d+1}}_{d+1}\rangle.
\]
Denote by $G_i$ the group defined by the presentation $P_i$.
\vskip 2mm

This section considers the case when $G$ has finite non-trivial abelianisation. In general, whenever we have proper power relators we assume the powers are greater than or equal to one. First we prove the following lemma, which is used in the main theorem of \cite{Edjvet-balanced}. \footnote{This lemma is not proved in \cite{Edjvet-balanced}. The proof presented here was suggested to the author by M. Edjvet through private communication.}
\vskip 2mm

\begin{lem}\label{lemma for thm}
Let $G$ be a finitely presented group given by the following presentation
\[
Q=\langle x_1,\ldots,x_d \mid u^{m_1}_1,\ldots, u^{m_d}_{d}\rangle.
\]
Let $H$ be the commutator subgroup of $G$ and assume $G$ has finite abelianisation. Denote by $\overline{H}$ the inverse image of $H$ under $\varphi$, where $\varphi$ is the canonical map $\varphi:F_d\longrightarrow G$ induced by the presentation $Q$.
Then, the order of $u_i$ in $F_d/\overline{H}$, is $m_i$, for all $i$, $1\leq i\leq d$.
\end{lem}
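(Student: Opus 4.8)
The plan is to identify $F_d/\overline{H}$ with the abelianisation of $G$ and then reduce the statement to a question of linear algebra over $\mathbb{Z}$. Since $\varphi$ is surjective and $H=[G,G]$, the correspondence theorem gives $\varphi(\overline{H})=H$ and hence an isomorphism $F_d/\overline{H}\cong G/H=G^{\ab}$, under which the image of $u_i$ corresponds to $\varphi(u_i)[G,G]$. Writing $a_i\in\mathbb{Z}^d$ for the image of $u_i$ under the abelianisation map $F_d\to F_d^{\ab}\cong\mathbb{Z}^d$ (that is, its vector of exponent sums), abelianising the presentation $Q$ yields $G^{\ab}\cong\mathbb{Z}^d/L$, where $L$ is the subgroup generated by $m_1 a_1,\dots,m_d a_d$, and the image of $u_i$ becomes $a_i+L$. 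Thus it suffices to prove that the order of $a_i$ in $\mathbb{Z}^d/L$ is exactly $m_i$.

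The relation $m_i a_i\in L$ shows at once that the order of $a_i$ divides $m_i$, so the whole content lies in the reverse direction, and this is the only place where the hypothesis of finite abelianisation enters. Let $A$ be the $d\times d$ integer matrix whose $i$-th row is $a_i$, and put $D=\mathrm{diag}(m_1,\dots,m_d)$, so that $L$ is the row lattice of $DA$ and $G^{\ab}\cong\mathbb{Z}^d/L$ is finite if and only if $\det(DA)\neq 0$. Since each $m_i\geq 1$ we have $\det D=\prod_i m_i\neq 0$, so finiteness of $G^{\ab}$ forces $\det A\neq 0$; equivalently, $a_1,\dots,a_d$ are linearly independent over $\mathbb{Q}$ and form a basis of $\mathbb{Q}^d$.

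Now suppose $k a_i\in L$ for some positive integer $k$, say $k a_i=\sum_{j=1}^{d} c_j m_j a_j$ with $c_j\in\mathbb{Z}$. Expressing both sides in the basis $\{a_1,\dots,a_d\}$ and comparing coefficients gives $c_j m_j=0$ for $j\neq i$ and $c_i m_i=k$; since $m_j\neq 0$ this yields $c_j=0$ for $j\neq i$ and $m_i\mid k$. Hence the least positive $k$ with $k a_i\in L$ is $m_i$, so the order of $a_i$ in $\mathbb{Z}^d/L$, and therefore the order of $u_i$ in $F_d/\overline{H}$, equals $m_i$, as required.

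The only delicate point is the passage from finiteness of $G^{\ab}$ to the linear independence of the $a_i$; once that is in hand, the divisibility $m_i\mid k$ is forced by the uniqueness of coordinates in a basis, and no proper divisor of $m_i$ can annihilate $a_i$. I would emphasise that this is exactly where the balanced shape of the presentation is used: the equal number of generators and relators makes $A$ a square matrix, so that $\det A\neq 0$ is available and the $a_i$ genuinely form a basis rather than merely a spanning or independent set.
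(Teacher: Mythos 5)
Your proof is correct, and it reaches the conclusion by a genuinely different route from the paper's. The paper first normalises the presentation via repeated application of Lemma 11.8 of Lyndon--Schupp so that the exponent sum matrix is triangular, notes that the order of the abelianisation is then the product of the diagonal entries $m_i a_{i,i}$, and argues by contradiction: if some $u_i$ had order $n_i<m_i$ in $ab(G)$, exchanging the relator $u_i^{m_i}$ for $u_i^{n_i}$ would present the same abelian group while producing a strictly smaller determinant, which is absurd. You avoid the normalisation entirely: you keep the raw exponent vectors $a_1,\dots,a_d$, observe that finiteness of $ab(G)$ forces $\det(DA)\neq 0$ and hence $\det A\neq 0$, so that the $a_i$ form a $\mathbb{Q}$-basis, and then uniqueness of coordinates in that basis converts any relation $k a_i\in L$ directly into the divisibility $m_i\mid k$. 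Your version is more self-contained and arguably more robust: it is a direct argument rather than a proof by contradiction, it needs no citation of Lyndon--Schupp, and it makes explicit the two small points the paper leaves implicit (that the relator swap presents the same group only because $n_i$ divides $m_i$, and that the determinant comparison is meaningful only because finiteness forces the diagonal entries to be nonzero). What the paper's triangular form buys is an explicit formula for $|ab(G)|$ as a product of diagonal entries, a normalisation it reuses later, e.g.\ in the proof of \cref{prop: conditions under which G has finite non trivial ab}; your argument extracts only what this lemma needs. Both proofs use the balanced shape of $Q$ at exactly the same spot, namely that $A$ is square, so that nonvanishing of $\det A$ is available and the $a_i$ genuinely form a basis.
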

\begin{proof}
Assume, by repeated application of Lemma 11.8 in \cite{lyndon} (p. 293), that the presentation above satisfies $\sigma_{x_j}(u_i)=0$ if $1\leq i<j\leq d$, where $\sigma_{x_j}(u_i)$ is the exponent sum of the generator $x_j$ in the word $u_i$. For the remainder of the proof, denote $\sigma_{x_j}(u_i)$ by $a_{i,j}$.
\vskip 2mm

The abelianisation of the group $G$ admits the following presentation
\[
ab(Q)=\langle x_1,\ldots,x_d \mid u^{m_1}_1,\ldots, u^{m_d}_{d}, [x_i,x_j]\rangle,
\]
where $1\leq i<j\leq d$. Moreover, by the above argument we may assume $a_{i,j}=0$ for $1\leq i<j\leq d$, hence for all $i$ such that $1\leq i\leq d$, the word $u_i^{m_i}$ may be written as $(x_1^{a_{i_1}}x_2^{a_{i_2}}\cdots x_i^{a_{i_i}})^{m_i}$.
\vskip 2mm

Denote by $M_{ab(Q)}$ the exponent sum matrix of $ab(Q)$. Therefore,  its $(i,j)$-th entry is $(M_{ab(Q)})_{i,j}=\sigma_{x_j}(u^{m_i}_i)=m_ia_{i_{j}}$. 

\vskip 2mm

Since the presentation $Q$ is balanced, the exponent sum matrix $M_{ab(Q)}$ is square. Moreover, the absolute value of the determinant of the matrix $M_{ab(Q)}$ gives the order of the abelianisation of $G$, which is the same as the index of $H$ in $G$.
\vskip 2mm

Note that as $\sigma_{x_j}(u_i)=0$ if $0\leq i<j\leq d$, the determinant is the multiplication of the diagonal elements $\prod_{i=1}^{d}(M_{ab(Q)})_{i,i}=\prod_{i=1}^{d}m_ia_{i_{i}}$.
\vskip 2mm

For all $i$, $1\leq i\leq d$, let $\overline{u}_i$ be the element in the abelianisation of $G$ that corresponds to the word $u_i$. Assume the order of $\overline{u}_i$ is $n_i<m_i$ for some $i$, $1\leq i\leq d$. Exchange the relator $(x_1^{a_{i_1}}x_2^{a_{i_2}}\cdots x_i^{a_{i_i}})^{m_i}$ in the presentation $ab(Q)$, for $(x_1^{a_{i_1}}x_2^{a_{i_2}}\cdots x_i^{a_{i_i}})^{n_i}$. Therefore, the exponent sum matrix associated to this new presentation has in its $(i,i)$-th entry, $n_ia_{i_{i}}$ instead of $m_ia_{i_{i}}$. This means that the index of $H$ in $G$ is the product $m_1a_{1_{1}}\cdots n_ia_{i_{i}}\cdots m_na_{n_{n}}$, which is strictly less than $\prod_{i=1}^{d}m_ia_{i_{i}}$. This is a contradiction and hence the order of $\overline{u}_i$ is $m_i$ for all $i$, $1\leq i\leq d$.

\end{proof}
\vskip 2mm

Consider $I=\{1,\ldots, d+1\}$, and take $J$ to be the set of elements $j\in I$, such that $G_j$ has finite abelianisation. Say the cardinality of $J$ is $l$, where $l\geq 0$.
\vskip 2mm

\begin{thm}\label{thm: 1}
Let $G$ be a finitely presented group with presentation
\[
P=\langle x_1,\ldots,x_d \mid u^{m_1}_1, \ldots, u^{m_{d+1}}_{d+1}\rangle,
\]
where $d\geq 2$. Suppose $G$ has non-trivial finite abelianisation. Then
\begin{enumerate}
\item The commutator subgroup of $G$ has deficiency greater than one if
\[
d-l-\sum_{i\notin J}\dfrac{1}{m_{i}}>1.
\]
\item If for some $j\in J$, the image of $u_j^{m_j}$ in $G_j$ is contained in the commutator subgroup of $G_j$, then the commutator has deficiency greater than one if
\[
d-\sum_{i\in I,i\neq j}\dfrac{1}{m_{i}}-\dfrac{1}{k}>1,
\]
where $k$ is the order of $u_j$ in the abelianisation of $G_j$.
\end{enumerate}
\end{thm}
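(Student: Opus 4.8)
The plan is to bound the deficiency of the commutator subgroup $H=[G,G]$ from below by a Reidemeister--Schreier computation, just as in the proof of \cref{deficiency for finite index subgroups}, but carried out for $H$ itself rather than for the subgroups realising the residual orders. Write $n=|G^{ab}|=|G:H|$ and let $\ell_i$ be the order of $\varphi(u_i)$ in $G^{ab}=G/H$; since $u_i^{m_i}=1$ in $G$ we have $\ell_i\mid m_i$. Rewriting the $d+1$ relators over the $n$ cosets, each relator $u_i^{m_i}$ contributes $n/\ell_i$ relators (one for each $\langle\varphi(u_i)\rangle$-orbit on $G/H$), while $\overline{H}$ is free of rank $1+n(d-1)$. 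This produces a presentation of $H$ of deficiency
\[
\operatorname{def}(H)\ \ge\ 1+n\Bigl(d-\sum_{i=1}^{d+1}\frac{1}{\ell_i}-1\Bigr),
\]
so, as $n\ge 1$, it suffices in each case to show $d-\sum_{i=1}^{d+1}\ell_i^{-1}>1$. The whole problem thus reduces to pinning down the orders $\ell_i$.

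For part (1) I would work with the exponent-sum vectors $\overline{u}_1,\dots,\overline{u}_{d+1}\in\mathbb{Z}^d$. Finiteness of $G^{ab}$ says these $d+1$ vectors span $\mathbb{Q}^d$, while for $i\notin J$ the group $G_i$ has infinite abelianisation, which means $\{\overline{u}_s:s\ne i\}$ has rank at most $d-1$. Together these force $\overline{u}_i\notin\operatorname{span}_{\mathbb{Q}}\{\overline{u}_s:s\ne i\}$, and a direct computation of the least $t>0$ with $t\,\overline{u}_i\in\langle m_s\overline{u}_s:s\in I\rangle$ then gives $t=m_i$; hence $\ell_i=m_i$ for every $i\notin J$. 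For the $l$ indices $i\in J$ I would use only $\ell_i\ge 1$, so that $\sum_i \ell_i^{-1}\le l+\sum_{i\notin J}m_i^{-1}$. The hypothesis $d-l-\sum_{i\notin J}m_i^{-1}>1$ now yields $d-\sum_i\ell_i^{-1}>1$ and the displayed bound closes the case.

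For part (2) the key point is that the hypothesis collapses the abelianisation: if the image of $u_j^{m_j}$ lies in the commutator subgroup of $G_j$ then $m_j\overline{u}_j\in L_j:=\langle m_s\overline{u}_s:s\ne j\rangle$, so adjoining the relator $m_j\overline{u}_j$ leaves $L_j$ unchanged and $G^{ab}=G_j^{ab}$. Since $j\in J$, the subpresentation $P_j$ is balanced with finite abelianisation, so \cref{lemma for thm} applies to $G_j$ and shows the order of $u_i$ in $G_j^{ab}=G^{ab}$ is exactly $m_i$ for every $i\ne j$; thus $\ell_i=m_i$ for $i\ne j$ and $\ell_j=k$. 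Substituting gives $d-\sum_{i\ne j}m_i^{-1}-k^{-1}>1$, which is precisely the hypothesis, so again $\operatorname{def}(H)>1$.

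The step I expect to be the main obstacle is the very first one: justifying that the estimate of \cref{deficiency for finite index subgroups} may be run for the commutator subgroup using the \emph{abelianisation} orders $\ell_i$ rather than the residual orders $k_i$. Because the commutator need not realise the residual order of each generator root (one can have $\ell_i<k_i$ when $i\in J$), the resulting bound is weaker than the residual one, and the real content of the theorem is that the two hypotheses are exactly what is needed to keep this weaker quantity strictly above $1$. The linear-algebra identification of the $\ell_i$ in part (1) and the equality $G^{ab}=G_j^{ab}$ in part (2) are the facts that secure this.
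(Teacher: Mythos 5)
Your proof is correct, and its skeleton coincides with the paper's: both arguments run the Reidemeister--Schreier deficiency count of \cref{deficiency for finite index subgroups} for the commutator subgroup itself, using the orders $\ell_i$ of the $u_i$ in the abelianisation of $G$, and then reduce everything to pinning down those orders. Your part (2) is essentially the paper's proof verbatim: the hypothesis gives $F_d'N_j=F_d'N$, so the abelianisations of $G$ and $G_j$ coincide, and \cref{lemma for thm} applied to the balanced presentation $P_j$ gives $\ell_i=m_i$ for $i\neq j$ and $\ell_j=k$. Where you genuinely diverge is part (1). The paper proves $\ell_i=m_i$ for $i\notin J$ by applying \cref{lemma for thm} to \emph{every} $G_j$ with $j\in J$, combined with the fact (cited from Jacobson; in essence that the order of the abelianisation is the gcd of the maximal minors of the exponent-sum matrix) that the order $n$ of $\ab(G)$ equals $\gcd\{k_j : j\in J\}$; writing $k_j=s_jn$, the $s_j$ then have no common factor, which forces the order of $u_i$ in $\ab(G)$ up from $m_i/q$ to $m_i$. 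You instead argue directly with exponent-sum vectors: finiteness of $\ab(G)$ makes $\{\overline{u}_s : s\in I\}$ span $\mathbb{Q}^d$, infiniteness of $\ab(G_i)$ for $i\notin J$ confines $\{\overline{u}_s : s\neq i\}$ to a hyperplane, and projecting onto the one-dimensional quotient shows that $t\,\overline{u}_i\in\langle m_s\overline{u}_s : s\in I\rangle$ forces $m_i\mid t$. This is more elementary and self-contained --- your part (1) needs neither \cref{lemma for thm} nor the gcd fact --- and it makes transparent why nothing beyond $\ell_i\geq 1$ is required for the indices in $J$. Finally, the worry you flag about running the estimate of \cref{deficiency for finite index subgroups} with abelianisation orders instead of residual orders is not a gap: the underlying coset-orbit count is valid for any finite-index \emph{normal} subgroup $H$ with the orders taken in $G/H$ (normality is what makes every $\langle\varphi(u_i)\rangle$-orbit on $G/H$ have the same size $\ell_i$), and this is exactly how the paper itself invokes that theorem when it applies it to $G$ and its commutator subgroup.
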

\begin{proof}

$1$) For clarity we work in $F_d$, the non-abelian free group of rank $d$. Diagram (\ref{eq: diagram subgps}) is a diagram of subgroups that is useful for the proof. Denote $\langle\langle u^{m_1}_1, \ldots, u^{m_{d+1}}_{d+1} \rangle\rangle$ by $N$ and $\langle\langle u^{m_1}_1,\ldots,u^{m_{i-1}}_{i-1},u^{m_{i+1}}_{i+1},\ldots, u^{m_{d+1}}_{d+1} \rangle\rangle$ by $N_i$. The commutator subgroup of $F_d$ will be denoted by $F'_d$.
\vskip 2mm

\begin{equation}\label{eq: diagram subgps}
\xymatrix{
& F_d\ar@{-}[d]\ar@{-}@/_/[ddl]\ar@{-}@/^/[ddr]\ar@{-}@/^/[ddrr] &&\\
& F'_dN\ar@{-}@/_/[dl]\ar@{-}@/^/[dr]\ar@{-}[dd]\ar@{-}@/^/[drr]  && \\
F'_dN_1\ar@{-}[dd] && F'_dN_i\ar@{-}[dd] & F'_dN_{d+1}\ar@{-}[dd] \\
& N\ar@{-}@/^/[drr]\ar@{-}@/^/[dr]\ar@{-}@/_/[dl] &\\
N_1 && N_i & N_{d+1}\\
}
\end{equation}
\vskip 5mm

The commutator subgroup of $G$ and $G_i$ correspond under $\varphi$ to $F_d'N$ and $F_d'N_i$, respectively. Note that $F'_dN_i$ is contained in $F'_dN$ for all $i\in I$. Hence, as the abelianisation of $F_d/N$ is non-trivial, the abelianisation of $F_d/N_i$ is non-trivial for all $i\in I$. Also, the order of the abelianisation of $F_d/N$ is given by the greatest common divisor  of $\{k_1,\ldots,k_l\}$, where $k _j$ is the order of the abelianisation of $F_d/N_j$ for $j\in J$, i.e. the order of $F_d/F'_dN_j$ (page $185$, \cite{jacobson}). As the abelianisation of $G$ is finite by assumption, then $J$ is non-empty.
\vskip 2mm

Say $n$ is the order of $F_d/F'_dN$, the abelianisation of $G$, and $s_j$ the order of the quotient $F'_dN/F'_dN_j$, for $j\in J$. Clearly $k_j=s_jn$ for all $j\in J$, and since $n=$gcd$\{k_1,\ldots,k_l\}$, then the set of $\{s_j\}$ with $j\in J$, has no common factors.

\vskip 2mm

Consider $i\in I$ such that $i\notin J$. Using \cref{lemma for thm}, $u_i,u_i^{2},\ldots,u_i^{m_i-1}\notin F'_dN_j$ for all $j\in J$. Say the order of $u_i$ in $F_d/F_d'N$ is $q$. Then, as $u_i,u_i^{2},\ldots,u_i^{m_i-1}\notin F'_dN_j$ for all $j\in J$, the order of $u_i^{q}$ in $F_d'N/F_d'N_j$ is $m_i/q=\lambda$ for all $j\in J$. Therefore, $\lambda$ divides $s_j$ for $j\in J$. As the set $\{s_j\}$, $j\in J$, has no common factors, then $\lambda=1$, which means $q=m_i$. Therefore, $u_i,u_i^{2},\ldots,u_i^{m_i-1}\notin F'_dN$ for all $i\notin J$. The result then follows by applying \cref{deficiency for finite index subgroups} to $G$ and its commutator subgroup using the presentation $P$.
\vskip 2mm

$2$) Since $u_j^{m_j}$ is contained in $F'_dN_j$, then $F'_dN_j=F'_dN$. As $G_j$ has finite abelianisation, \cref{lemma for thm} implies $u_i$ has order $m_i$ in $F'_dN$, for all $i\neq j$. Apply \cref{deficiency for finite index subgroups} to $G$ and its commutator subgroup using presentation $P$ to obtain the result.

\end{proof}

\vskip 2mm

\begin{rem}\label{rem: finite abel iff J non empty}
Given that $G_i$ surjects onto $G$ for all $i\in I$, if $G_i$ has finite abelianisation for some $i$, then $G$ has finite abelianisation too. From the proof of the previous theorem, if $G$ has finite abelianisation, then $J$ must be non-empty. That is, there is a $G_i$ with finite abelianisation. Therefore, $G$ has finite abelianisation if and only if $J$ is non-empty. 
\vskip 2mm

A condition that guarantees the abelianisation is non-trivial is given by the following proposition.

\end{rem}
\vskip 2mm

\begin{prop}\label{prop: Gi inf abel implies G has nontrivial abel}
Let $G$ be given by the presentation $P=\langle x_1,\ldots,x_d \mid u^{m_1}_1, \ldots, u^{m_{d+1}}_{d+1}\rangle$. Suppose $G_i$ has infinite abelianisation for some $i$ where $m_i>1$. Then $G$ has non-trivial abelianisation.
\end{prop}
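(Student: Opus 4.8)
The plan is to reduce the statement to a computation in linear algebra, first over $\mathbb{Z}$ and then over a suitable finite field. Write $\mathbf{a}_k = (\sigma_{x_1}(u_k), \ldots, \sigma_{x_d}(u_k)) \in \mathbb{Z}^d$ for the vector of exponent sums of the word $u_k$. Abelianising $P$ identifies $ab(G)$ with $\mathbb{Z}^d / L$, where $L$ is the subgroup generated by the $d+1$ vectors $m_k \mathbf{a}_k$, $1 \le k \le d+1$; thus $ab(G)$ is trivial if and only if $L = \mathbb{Z}^d$. Similarly $ab(G_j) = \mathbb{Z}^d / L_j$, where $L_j$ is generated by the $d$ vectors $\{m_k \mathbf{a}_k : k \neq j\}$. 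Let $M_j$ denote the $d \times d$ integer matrix whose rows are these $d$ vectors. Since $ab(G_j)$ is finite exactly when $L_j$ has finite index in $\mathbb{Z}^d$, the hypothesis that $ab(G_i)$ is infinite is equivalent to the singularity $\det M_i = 0$.

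First I would fix a prime $p$ dividing $m_i$; this is possible precisely because $m_i > 1$, which is where that hypothesis enters. I then reduce everything modulo $p$, working in $\mathbb{F}_p^d = \mathbb{Z}^d / p\mathbb{Z}^d$, and examine the images $\overline{m_k \mathbf{a}_k}$ of the generators of $L$. The row indexed by $i$ vanishes, because $p \mid m_i$ forces $m_i \mathbf{a}_i \equiv 0 \pmod p$. The remaining $d$ rows are exactly the rows of $M_i$ reduced mod $p$, and since $\det M_i = 0$ over $\mathbb{Z}$ we get $\det M_i \equiv 0 \pmod p$, so these $d$ vectors are linearly dependent over $\mathbb{F}_p$ and span a subspace of dimension strictly less than $d$.

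Putting these together, all $d+1$ reduced generators of $L$ lie in a proper $\mathbb{F}_p$-subspace of $\mathbb{F}_p^d$, so the image $\bar L$ of $L$ is proper. Hence $L + p\mathbb{Z}^d \neq \mathbb{Z}^d$, which forces $L \neq \mathbb{Z}^d$; in fact $ab(G) = \mathbb{Z}^d/L$ surjects onto the nonzero group $\mathbb{F}_p^d / \bar L$, exhibiting a $\mathbb{Z}/p$ quotient. Therefore $ab(G)$ is non-trivial, as required.

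The argument is short once the dictionary is set up, and I expect the only real obstacle to be careful bookkeeping: one must ensure that reduction modulo a prime dividing $m_i$ simultaneously annihilates the $i$-th relator and exploits the singularity $\det M_i = 0$ coming from the infinite abelianisation of $G_i$. An equivalent route avoids finite fields altogether: factoring the powers out of each row gives $\det M_k = (\prod_{j \neq k} m_j)\, \det(\mathbf{a}_j : j \neq k)$, and since $k \neq i$ forces $m_i$ to divide $\prod_{j\neq k} m_j$ while $\det M_i = 0$, the prime $p$ divides every maximal minor of the full $(d+1)\times d$ relator matrix, so $L$ is proper by the theory of determinantal divisors. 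I do not anticipate any deeper difficulty than phrasing this correspondence precisely.
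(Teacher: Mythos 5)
Your argument is correct, but it takes a genuinely different route from the paper's. The paper never passes to exponent-sum matrices: since $G_i$ has infinite finitely generated abelianisation, it admits a surjection $\psi$ onto $\mathbb{Z}$, and $G = G_i/\langle\langle \overline{u}_i^{m_i}\rangle\rangle$ therefore surjects onto $\mathbb{Z}$ quotiented by the image of the normal closure of $\overline{u}_i^{m_i}$, namely $\mathbb{Z}/m_i\psi(\overline{u}_i)\mathbb{Z}$; since $m_i>1$, the integer $m_i\psi(\overline{u}_i)$ is either $0$ or of absolute value at least $2$, never $\pm 1$, so this cyclic quotient is non-trivial. Your route instead encodes $ab(G)$ as $\mathbb{Z}^d/L$, translates the hypothesis on $G_i$ into the singularity $\det M_i = 0$, and reduces modulo a prime $p \mid m_i$; the bookkeeping you flag (the prime $p$ annihilating the $i$-th row while the singularity handles the remaining $d$ rows) is exactly right, the conclusion $L + p\mathbb{Z}^d \neq \mathbb{Z}^d$ is sound, and so is your alternative via determinantal divisors, since the gcd of the maximal minors of the relator matrix equals the product of the invariant factors of $\mathbb{Z}^d/L$. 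As for what each approach buys: the paper's is shorter and stays at the level of quotient groups, producing a non-trivial cyclic quotient of order $m_i|\psi(\overline{u}_i)|$ (or a copy of $\mathbb{Z}$ when $\psi(\overline{u}_i)=0$); yours yields slightly finer information, namely a non-trivial elementary abelian $p$-quotient --- non-vanishing mod-$p$ homology --- for every prime $p$ dividing $m_i$, and it is stylistically closer to the exponent-sum-matrix manipulations the paper uses elsewhere, for instance in \cref{lemma for thm} and \cref{prop: conditions under which G has finite non trivial ab}.
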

\begin{proof}
By assumption $G_i$ surjects onto the integers. Denote this map by $\psi$ and denote by $\overline{u}_i$ the image of $u_i$ in $G_i$. Since $m_i>1$, then $\psi(\overline{u}_i^{m_i})=m_i\psi(\overline{u}_i)\neq 1$. Hence $G$, which is the quotient of $G_i$ by the normal subgroup generated by $\overline{u}_i^{m_i}$, surjects onto $\mathbb{Z}$ quotiented out by $\psi(\overline{u}_i^{m_i})$. As $\psi(\overline{u}_i^{m_i})\neq 1$, the latter quotient is non-trivial and hence the result follows.
\end{proof}
\vskip 2mm

\begin{ex}
Consider the group $G$ given by the presentation
\[
P=\langle x_1,\ldots,x_{2n}\mid (x_1x_2)^{m_1},x_2^{m_2},\ldots, (x_{2n-1}x_{2n})^{m_{2n-1}},x_{2n}^{m_{2n}},u^{\alpha}\rangle,
\]
where $\alpha>1$, $m_i\geq 2$, for $i=1,\ldots, 2n$, and where $u$ only depends on the generators indexed by even numbers $x_2,x_4,\ldots,x_{2n}$. Since the exponent sum matrix associated to $G_{2n+1}$ is upper triangular with non-zero entries in its diagonal, then $G_{2n+1}$ has finite abelianisation. Now consider the exponent sum matrix $M_P$ associated to $P$. Since $u$ only depends on $x_2,x_4,\ldots,x_{2n}$, and the remaining relators are $(x_1x_2)^{m_1}, x_2^{m_2},\ldots, (x_{2n-1}x_{2n})^{m_{2n-1}}, x_{2n}^{m_{2n}}$, then the odd column $2i-1$ ($1\leq i\leq n$) only has one non-zero entry: that which corresponds to the $2i-1$ relator (see \cref{matrix}). Therefore, deleting such a relator from the presentation gives a group $G_{2i-1}$ with infinite abelianisation. Hence, by \cref{prop: Gi inf abel implies G has nontrivial abel}, the group $G$ has finite non-trivial abelianisation, and by \cref{thm: 1} part $1$, its commutator has deficiency greater than one in the following cases:
\begin{itemize}
 \item $n=3$, $m_1,m_3,m_5\geq 3$ and at least one of them greater than three.
 \item $n=4$ and $m_{2i-1}> 2$ for some $i$, $1\leq i\leq 4$.
\item $n\geq 5$.
\end{itemize}
\begin{equation}\label{matrix}
M_P=\begin{pmatrix}
m_1 & m_1 & 0 & \cdots\\
0 & m_2 & 0 & \cdots\\
\vdots & 0 & m_3 & m_3 & 0 & \cdots\\
& 0 & 0 & m_4 & 0 & \cdots\\
& \vdots & \vdots & 0 & m_5 & \cdots\\
& & & \vdots \\
0 & \sigma_{x_2}(u^{\alpha}) & 0 & \sigma_{x_4}(u^{\alpha}) & 0 & \cdots\\
\end{pmatrix}
\end{equation}
\end{ex}
\vskip 2mm

\begin{ex}
Consider the group $G$ given by a balanced presentation
\[
P=\langle x_1,\ldots,x_d \mid u^{m_1}_1,\ldots, u^{m_d}_{d}\rangle,
\]
such that $G$ has finite abelianisation and where $m_i\geq 2$, for some $i$, $1\leq i\leq d$. Given that $G$ has finite abelianisation and $m_i\geq 2$ for some $i$, then by the proof of \cref{lemma for thm} $G$ has non-trivial abelianisation. Take $p$ a prime dividing the order of $ab(G)$, the abelianisation of $G$. Note that there is an element $v$ in $ab(G)$ with order equal to $p$ (Cauchy's Theorem). Take $w$ in $G$ which corresponds to $v$ under the canonical surjection from $G$ to $ab(G)$. 
Take $n$ any multiple of $p$ and consider $\overline{G}:=G/\langle\langle w^{n}\rangle\rangle$. As $v^n$ is trivial in $ab(G)$, then $G/\langle\langle w^{n}\rangle\rangle$ (and hence $\overline{G}$) has non-trivial abelianisation. Moreover, by \cref{thm: 1} part $2$, the commutator of $\overline{G}$ has deficiency greater than one if
\[
d-\sum_{i=1}^{d}\dfrac{1}{m_i}-\dfrac{1}{p}>1.
\]
The previous inequality holds for all $m_i\geq 2$ if $d\geq 4$. If $d=3$ then either $p\neq 2$ or $m_i\geq 3$ for some $i$, $1\leq i\leq d$, would be enough to ensure the inequality holds.
\end{ex}

\vskip 2mm

\cref{prop: Gi inf abel implies G has nontrivial abel} can be used to prove $G$ has non-trivial abelianisation. However, there are examples of presentations $P$ with deficiency minus one, which define a group $G$, such that $G_i$ has finite abelianisation for all $i$, $1\leq i\leq n+1$, but $G$ has non-trivial abelianisation. For these cases, the following proposition, which gives conditions under which the abelianisation of $G$ is non-trivial (provided $G$ has finite abelianisation), may be useful.
\vskip 2mm

\begin{prop}\label{prop: conditions under which G has finite non trivial ab}
Let $G$ be given by $P=\langle x_1,\ldots,x_d \mid u^{m_1}_1, \ldots, u^{m_{d+1}}_{d+1}\rangle$. 
Suppose $G_i$ has finite abelianisation for some $i$. If $(m_i,m_j)\neq 1$ for some $j\neq i$,
 then the abelianisation of $G$ is finite and non-trivial.
\end{prop}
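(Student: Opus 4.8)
The plan is to establish the two assertions of the conclusion separately, since they follow from the two different hypotheses. Finiteness is immediate and almost formal: as observed in \cref{rem: finite abel iff J non empty}, the group $G_i$ surjects onto $G$ (the presentation $P$ is obtained from $P_i$ by adjoining the single relator $u_i^{m_i}$), so $ab(G)$ is a quotient of the finite group $ab(G_i)$ and is therefore itself finite. The real content is thus the non-triviality of $ab(G)$, and for this it is the hypothesis $(m_i,m_j)\neq 1$ that does the work, not the finiteness of $ab(G_i)$.

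For non-triviality I would fix a prime $p$ dividing both $m_i$ and $m_j$, which exists precisely because $(m_i,m_j)\neq 1$ (in particular this already forces $m_i,m_j\geq 2$). The idea is to detect $p$-torsion by passing to the reduction $ab(G)\otimes\mathbb{Z}/p\mathbb{Z}=ab(G)/p\,ab(G)$: for a finite abelian group this reduction is non-zero if and only if $p$ divides the order of the group, so exhibiting one such prime $p$ with non-vanishing reduction is enough to conclude $ab(G)$ is non-trivial. Now $ab(G)$ is presented by the exponent sum matrix $M_P$, the $(d+1)\times d$ integer matrix whose $k$-th row is $(m_k a_{k,1},\ldots,m_k a_{k,d})$ with $a_{k,j}=\sigma_{x_j}(u_k)$, this row being the abelianisation of the relator $u_k^{m_k}$. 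Reducing modulo $p$, the $k$-th row of the resulting matrix $\overline{M_P}$ over $\mathbb{F}_p$ vanishes whenever $p\mid m_k$; in particular both the $i$-th and the $j$-th rows of $\overline{M_P}$ are zero.

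Hence $\overline{M_P}$ has at most $d-1$ non-zero rows among its $d+1$ rows, so its rank over $\mathbb{F}_p$ is at most $d-1<d$. Consequently the cokernel $\mathbb{F}_p^{d}/\langle\text{rows of }\overline{M_P}\rangle$, which is exactly $ab(G)\otimes\mathbb{Z}/p\mathbb{Z}$, has dimension at least $1$ and is in particular non-zero. This gives $p\mid |ab(G)|$, so $ab(G)$ is non-trivial, and together with the finiteness already noted this proves the proposition.

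The argument is short, and the only genuine decision is to reduce modulo a common prime $p$ rather than attempt a direct determinantal computation on the rectangular matrix $M_P$: over $\mathbb{F}_p$ the two relators whose powers are divisible by $p$ simply disappear, and it is this forced drop in rank that produces the torsion. The points deserving care are purely bookkeeping ones, namely keeping the matrix conventions straight (rows indexed by relators, columns by generators, exactly as in the definition of $M_{ab(Q)}$) and confirming that finiteness really is free from the quotient structure, so that nothing beyond the single common prime $p$ is needed for non-triviality.
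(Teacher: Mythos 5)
Your proposal is correct, but it takes a genuinely different route to non-triviality than the paper does. The paper (after noting, as you do, that finiteness of $ab(G)$ is immediate because $G$ is a quotient of $G_i$) works with the \emph{square} exponent sum matrix of $G_{d+1}$ (taking $i=d+1$ without loss of generality): it triangularises it via Lemma 11.8 of \cite{lyndon} as in \cref{lemma for thm}, reads off $|ab(G_{d+1})|=\prod_{k=1}^{d}m_k a_{k,k}$, observes that a common prime $q$ of $m_{d+1}$ and $m_j$ must divide this order, and then invokes the primary decomposition of $ab(G_{d+1})$ to project onto a cyclic $q$-power factor $C_{q_1^{l_1}}$; since $q\mid m_{d+1}$, the image of $u_{d+1}^{m_{d+1}}$ fails to generate that factor, so quotienting by it leaves a non-trivial abelian image of $G$. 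You instead work directly with the rectangular $(d+1)\times d$ exponent sum matrix of $P$ itself and reduce modulo the common prime $p$: the rows indexed by $i$ and $j$ vanish, the rank over $\mathbb{F}_p$ drops to at most $d-1$, and right-exactness of $-\otimes\mathbb{Z}/p\mathbb{Z}$ gives $ab(G)\otimes\mathbb{Z}/p\mathbb{Z}\neq 0$. Your argument is more elementary (no triangularisation, no structure theorem for finite abelian groups) and it cleanly separates the two hypotheses: as you point out, the finiteness of $ab(G_i)$ is used only for finiteness of $ab(G)$, so your proof actually shows that non-triviality follows from the common-factor hypothesis alone, a slight strengthening that is not visible in the paper's argument, where finiteness of $ab(G_{d+1})$ is woven into the determinant computation that produces the prime $q$.
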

\begin{proof}
As the abelianisation of $G_i$ is finite and $G$ is a quotient of $G_i$, then the abelianisation of $G$ is finite too.
\vskip 2mm

Without loss of generality, assume $i=d+1$. Now consider the exponent sum matrix of $G_{d+1}$. Denote the $(j,k)$ entry of this matrix by $m_ja_{j,k}$ where $m_j$ is the power of $u_j$ and $a_{j,k}=\sigma_{x_k}(u_j)$. As in \cref{lemma for thm}, this matrix may be assumed to be lower triangular. Moreover, assume it has positive values on its diagonal so that its determinant, which is the order of the abelianisation of $G_i$, is the multiplication of the diagonal elements. Therefore, the abelianisation of $G_{d+1}$ is $\prod_{i=1}^{d}m_ia_{i,i}$.
\vskip 2mm

Consider a surjective map from $G_{d+1}$ to the prime decomposition of the abelianisation of $G_{d+1}$
\[
G_{d+1}\longrightarrow C_{{q_1}^{l_1}}\times\cdots\times C_{{q_s}^{l_s}}.
\]
As the order of the abelianisation is $\prod_{i=1}^{s}{q_i}^{l_i}$ then $\prod_{i=1}^{s}{q_i}^{l_i}$ is the prime factorisation of $\prod_{i=1}^{d}m_ia_{i,i}$. If $(m_{d+1},m_j)\neq 1$, then there is a prime $q$ which appears in $\prod_{i=1}^{s}{q_i}^{l_i}$ and divides both $m_{d+1}$ and $m_j$. 
\vskip 2mm

Assume, without loss of generality, that $q=q_1$. Consider the projection of $C_{{q_1}^{l_1}}\times\cdots\times C_{{q_s}^{l_s}}$ onto $C_{{q_1}^{l_1}}$. Denote the composition from $G_{d+1}$ to $C_{{q_1}^{l_1}}$ by $\psi$. Then $\psi(\overline{u}_{d+1}^{m_{d+1}})=m_{d+1}\psi(\overline{u}_{d+1})$, where $\overline{u}_{d+1}$ is the image of $u_{d+1}$ in $G_{d+1}$, does not generate $C_{{q_1}^{l_1}}$ as $q_1$ divides $m_i$. Hence, $C_{{q_1}^{l_1}}$ quotiented out by $\psi(\overline{u}_{d+1}^{m_{d+1}})$ is non-trivial. Finally, $G_{d+1}$ quotiented out by $\overline{u}_{d+1}^{m_{d+1}}$ surjects onto $C_{{q_1}^{l_1}}$ quotiented out by $\psi(\overline{u}_{d+1}^{m_{d+1}})$. The result follows as the former quotient is isomorphic to $G$.
\end{proof}
\vskip 2mm

\begin{ex}
Consider $G$ the one relator quotient of a finite product of non-trivial finite cyclic groups. This group admits the following presentation
\begin{equation}
P=\langle x_1,\ldots,x_n \mid x_1^{m_1},\ldots,x_n^{m_n}, w^{s}\rangle.
\end{equation}
We consider examples where $m_i,s>1$, for all $i$, $1\leq i\leq n$.
\vskip 2mm

Now suppose $\sigma_{x_i}(w)\neq 0$ for all $i$, $1\leq i\leq n$, and suppose $s$ is a multiple of $k$, where $k=\prod_{i=1}^{n}m_i$. Then we claim $G_j$ has finite abelianisation for all $j$, $1\leq j\leq n+1$.

\vskip 2mm

This is trivial for $G_{n+1}$ as the latter is the product of finite cyclic groups. For $G_j$, $1\leq j\leq n$, consider the map $\psi$ from $F_n$, the non-abelian free group of rank $n$ freely generated by $x_1,\ldots,x_n$, to $\mathbb{Z}^{n}$, which sends $x_1$ to $(1,0,\ldots,0)$, $x_2$ to $(0,1,0\ldots,0)$, and so on. The image of $w$ in $\mathbb{Z}^{n}$ is given by $(a_1,\ldots,a_n)$, where $a_i=\sigma_{x_i}(w)$, for all $i$, $1\leq i\leq n$. By assumption, $a_i\neq 0$ for all $i$, $1\leq i\leq n$. 
\vskip 2mm

Denote $\psi(x_k)$ by $\delta_k$. We want to show that $G_j$ has finite abelianisation for $1\leq j\leq n$. For this, it suffices to show that given a fixed $j$, the order of $\delta_k$ in the quotient of $\mathbb{Z}^{n}$ over the subgroup generated by $\psi(x_i^{m_i})$ (for all $i\neq j, 1\leq i\leq n$) and $\psi(w^s)$, is finite when $1\leq k\leq n$. This is clear for $\delta_k$, where $k\neq j$, since the relator $x_k$ appears to a finite power in the presentation. So it remains to prove if for $\delta_j$.
\vskip 2mm

Note that $s$ is a multiple of $m_i$, for all $i$, $1\leq i\leq n$. Therefore, since $a_i\neq 0$ for all $i$, then $a_js\delta_j$ is in the subgroup of $\mathbb{Z}^n$ generated by $(a_1s,\ldots,a_ns)=\psi(w^s)$ and the set of $m_i\delta_i=\psi(x^{m_i})$, where $1\leq i\leq n$ but $i\neq j$.
\vskip 2mm

Finally, as $s$ is a multiple of $m_i$ for all $i$, $1\leq i\leq n$, then $G$ has non-trivial abelianisation by \cref{prop: conditions under which G has finite non trivial ab}.

\end{ex}

\section{Presentations with negative deficiency and infinite\\
abelianisation}

In \cite{Stohr}, R. Stohr proved that a deficiency one presentation with a proper power relator defines a large group. In \cite{Edjvet-balanced}, M. Edjvet noted that the proof in \cite{Stohr} also applies to groups defined by balanced presentations which define groups with infinite abelianisation, provided that the presentation has at least two proper power relators with the powers having a non-trivial common factor. His argument, however, is specific to balanced presentations and hence does not work for presentations with negative deficiency. The aim of this section is to show that Stohr's arguments extend to presentations of negative deficiency (and even to presentations with an infinite number of relators) which define groups with infinite abelianisation, as long as certain conditions, which we will present, are imposed.
\vskip 2mm

First, let us recall some of the key steps in the proof of \cite{Stohr}. Given a group $G$ with presentation $\langle x_1,\ldots,x_n\mid r_1,\ldots, r_{n-1}^{\alpha}\rangle$, where $\alpha>1$, R. Stohr uses Lemma $11.8$ in \cite{lyndon} to change it for a presentation
\[
P=\langle a_1,\ldots,a_{n-1},t\mid R_1,\ldots,R_{n-2},R_{n-1}^{\alpha}\rangle,
\]
where $\sigma_t(R_i)=0$ for all $i$, $1\leq i\leq n-1$. He notes that given a presentation such as $P$, there is a natural number $m$, such that the words $R_1,\ldots, R_{n-1}$, can be rewritten in terms of $a_{i,j}=t^{j}a_it^{-j}$, where $1\leq i\leq n-1$ and $-m\leq j\leq m$. Therefore, the group $G$ admits a presentation
\[
\tilde{P}=\langle a_{i,j},t\mid P_1,\ldots,P_{n-1}^{\alpha},ta_{i,j}t^{-1}=a_{i,j+1},\  (-m\leq j\leq m, i=1,\ldots,n-1)\rangle,
\]
where $P_i$ is $R_i$ rewritten in terms of $a_{i,j}$. Note that no $t$ appears in $P_i$, $1\leq i\leq n-1$. The remainder of the proof then concentrates on proving that a group with a presentation such as $\tilde{P}$ surjects onto the following $HNN$ extension with base $U$ an elementary abelian $p$-group of rank $2N+1$ with basis $u_{-N},\ldots,u_N$
\[
\langle U,t\mid tu_kt^{-1}=u_{k+1}, (-N\leq k<N) \rangle,
\]
where $p$ is a divisor of $\alpha$, and $N$ is a suitably chosen natural number. As such an $HNN$ extension has a finite index subgroup isomorphic to a non-abelian free group of finite rank (\cite{karras piet solitar}), then $G$ is large.
\vskip 2mm

For Stohr's argument to work, two things are key. First, the existence of a natural number $m$, such that the words $R_i$ may be rewritten in terms of $a_{i,j}$, $-m\leq j\leq m$, $i=1,\ldots,n$. Second, at most $n-2$ relators do not have a power divisible by the prime $p$ that we use to define the $HNN$ extension $H$. The relators that come to a $p$-power go to the identity under the surjection to the $HNN$ extension. The important thing to note is that the number of relators in $\tilde{P}$ is not important; as long as these two conditions are met, the group that $\tilde{P}$ defines is large.
\vskip 2mm

Now we give conditions under which such an $m$ exists. Let 
\[
\langle x_1,\ldots,x_n\mid r_1,\ldots,r_m,\ldots\rangle
\]
be a presentation that defines a group $G$ that surjects onto the integers. Consider $w=x_{i_1}^{k_{i_1}}\cdots x_{i_l}^{k_{i_l}}\in F_n$, a word in terms of $x_1,\ldots,x_n$. If $\psi:G\longrightarrow \mathbb{Z}$ is a surjective map from $G$ onto the integers, then consider $\tilde{\psi}=\psi\circ \varphi$, where $\varphi:F_n\longrightarrow G$ is the canonical map from $F_n$ to $G$. Define $\Delta_{\psi,j}(w)=k_{i_1}\tilde{\psi}(x_{i_1})+\cdots+k_{i_j}\tilde{\psi}(x_{i_j})$, where $1\leq j\leq l$. Define
\[
\Delta_{\psi}(w)=\underset{1\leq j\leq l}{\text{max}}\{ \mid\Delta_{\psi,j}(w)\mid\}.
\]
\vskip 2mm

We now consider a presentation with more relators than before,
\[
P_1=\langle a_1,\ldots,a_{n-1},t\mid R_1,\ldots,R_{n-2},R_{n-1}^{\alpha_1},\ldots,R_{n+k}^{\alpha_{k+2}} \rangle,
\]
where $\sigma_t(R_i)=0$ for all $i=1,\ldots,n+k$, and $p$ divides $\alpha_j$, for $j=1,\ldots,k+2$. As $\sigma_t(R_i)=0$ for all $i=1,\ldots,n+k$, then $G_1$, the group defined by $P_1$, surjects onto $\mathbb{Z}$ by sending $t$ to $1$ and $a_1,\ldots,a_{n-1}$ to $0$. Therefore, $\Delta_{\psi}(R_i)$ keeps track of the powers of $t$ in $R_i$. Hence, $R_i$ can be rewritten in terms of $t^{j}a_it^{-j}$, where $i=1,\ldots,n-1$ and $-\Delta_{\psi}(R_i)\leq j\leq \Delta_{\psi}(R_i)$. Since $P_1$ has a finite number of relators, then for all $i$, $i=1,\ldots,n+k$, $\Delta_{\psi}(R_i)$ is bounded by some $K\in\mathbb{N}$ and so $R_i$ for all $i$, $i=1,\ldots,n+k$, can be rewritten in terms of $t^{j}a_it^{-j}$, where $i=1,\ldots,n+k$ and $-K\leq j\leq K$. As all powers that appear in $P_1$ are divisible by a prime $p$, then the arguments in \cite{Stohr} carry through to conclude that $G_1$ is large.
\vskip 2mm

Now consider
\[
P_2=\langle a_1,\ldots,a_{n-1},t\mid R_1,\ldots,R_{n-2},R_{n-1}^{\alpha_1},\ldots,R_{n+k}^{\alpha_{k+2}},\ldots \rangle,
\]
where $\sigma_t(R_i)=0$ for all $i\in \mathbb{N}$, and $p$ divides $\alpha_j$, for all $j\in\mathbb{N}$. Once again, since $\sigma_t(R_i)=0$ for all $i$, then the group defined by $P_2$ surjects onto $\mathbb{Z}$ by sending $t$ to $1$ and the rest of the generators to $0$. If there is a $K\in\mathbb{N}$ such that $\Delta_{\psi}(R_i)\leq K$ for all $i\in\mathbb{N}$, $R_i$ can be rewritten in terms of $t^{j}a_lt^{-j}$ for all $i$, where $-K\leq j\leq K$ and $1\leq l\leq n-1$. As $p$ divides all the powers $\alpha_k$, then Stohr's arguments still carry through to conclude that $P_2$ defines a large group.
\vskip 2mm

As mentioned at the beginning of the section, we are interested in finding conditions under which a group $G$ with infinite abelianisation is large, regardless of its deficiency. In order to use Stohr's results, we first need to show that $G$ admits a presentation such as $P_1$ or $P_2$, where $\sigma_t(R_i)=0$ for all the relators present in the presentation. The following lemma shows that such a presentation always exists.
\vskip 2mm

\begin{lem}\label{mapZ}
Let $G$ be a group given by the presentation $P=\langle x_1,\ldots,x_d \mid u_1,\ldots,u_{s}\rangle$. Suppose $G$ admits a homomorphism $\phi$ onto the integers. Then $G$ admits a presentation $Q=\langle y_1,\ldots,y_d,t \mid r_1,\ldots,r_{s+1}\rangle$, such that $\phi\circ\varphi_Q$ maps $t$ to $1$ and $y_i$ to $0$ for all $i$, $1\leq i\leq d$, where $\varphi_Q$ is the canonical map from $F_{d+1}$ to $G$ induced by $Q$. In particular $\sigma_t(r_i)=0$ for all $i$, $1\leq i\leq s+1$.

\end{lem}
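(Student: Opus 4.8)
The plan is to use the surjection $\phi$ to manufacture a new ``stable letter'' $t$ with $\phi$-image $1$, and then to shear the original generators so that they acquire $\phi$-image $0$; both moves are Tietze transformations that together adjoin exactly one generator and one relator. First I would set $\psi = \phi \circ \varphi_P \colon F_d \to \mathbb{Z}$, where $\varphi_P$ is the canonical map of $P$, and write $b_i = \psi(x_i)$. Since $\varphi_P$ is onto $G$ and $\phi$ is onto $\mathbb{Z}$, the composite $\psi$ is surjective, so $\gcd(b_1,\ldots,b_d)=1$; by B\'ezout's identity I would choose integers $c_1,\ldots,c_d$ with $\sum_{i=1}^{d} c_i b_i = 1$ and put $w = x_1^{c_1}\cdots x_d^{c_d}$, so that $\psi(w)=1$.

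Next I would perform the Tietze transformation adjoining a generator $t$ together with the relator $tw^{-1}$, giving the isomorphic presentation $\langle x_1,\ldots,x_d,t \mid u_1,\ldots,u_s, tw^{-1}\rangle$, in which $t$ represents $\varphi_P(w)$ and hence has $\phi$-image $\psi(w)=1$. I would then change generators by setting $y_i = x_i t^{-b_i}$, equivalently $x_i = y_i t^{b_i}$, a sequence of elementary Nielsen moves that does not alter the relator count. Rewriting each $u_j$ and the relator $tw^{-1}$ in terms of $y_1,\ldots,y_d,t$ produces the desired presentation $Q = \langle y_1,\ldots,y_d,t \mid r_1,\ldots,r_{s+1}\rangle$, with $d+1$ generators and $s+1$ relators; one computes $(\phi\circ\varphi_Q)(t)=\psi(w)=1$ and $(\phi\circ\varphi_Q)(y_i)=b_i-b_i\psi(w)=0$ for every $i$.

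The exponent-sum statement then follows automatically. For any word $r\in F_{d+1}$, since $\phi\circ\varphi_Q$ sends every $y_i$ to $0$ and $t$ to $1$, the induced map on the abelianisation gives $(\phi\circ\varphi_Q)(r)=\sigma_t(r)$. Applying this to a relator $r=r_i$, which satisfies $\varphi_Q(r_i)=1$ in $G$ and therefore $(\phi\circ\varphi_Q)(r_i)=0$, yields $\sigma_t(r_i)=0$ for all $i$, $1\leq i\leq s+1$.

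I do not expect a serious obstacle, since the argument is bookkeeping with Tietze transformations; the only points needing care are checking that exactly one generator and one relator are added, so that $Q$ has the claimed shape, and that the shearing substitution $x_i\mapsto y_i t^{b_i}$ is a legitimate change of generating set. The one genuinely essential ingredient is the surjectivity of $\phi$: via $\gcd(b_1,\ldots,b_d)=1$ it produces a word $w$ with $\psi(w)=1$, which is precisely what makes $t$ map exactly to $1$.
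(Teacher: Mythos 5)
Your proposal is correct and follows essentially the same route as the paper: adjoin a stable letter $t$ representing a preimage of $1$ under $\phi$, shear the generators via $y_i = x_i t^{-\phi\circ\varphi_P(x_i)}$, rewrite the relators and add the single relator identifying $t$ with the chosen word, then deduce $\sigma_t(r_i)=0$ from $\phi\circ\varphi_Q(r_i)=0$. The only difference is cosmetic: where the paper simply fixes some word mapping to $1$, you exhibit one explicitly via B\'ezout's identity.
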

\begin{proof}
Consider the canonical map $\varphi_P$ from $F_d$ to $G$ induced by $P$. Fix $t$ an element of $F_d$ that maps to $1$ under $\phi\circ\varphi_P$. This element can be obtained as a word $\omega(x_1,\ldots,x_d)$ in terms of the generators $x_1,\ldots,x_d$. Consider $y_i:=x_it^{-\phi\circ\varphi_P(x_i)}$, for all $i$ such that $1\leq i\leq d$. Note that each element in the set $\{x_1,\ldots, x_d\}$ may be expressed in terms of elements in $\{y_1,\ldots, y_d,t\}$. Rewrite the relators $u_i$, $1\leq i\leq s$, and the word $wt^{-1}$ in terms of elements in $\{y_1,\ldots ,y_d,t\}$. Denote the rewritten relators by $r_i$, where $1\leq i\leq s+1$. Note that $r_i$ corresponds to the $u_i$ for $1\leq i\leq s$, while $r_{s+1}$ corresponds to $wt^{-1}$. Then, the presentation
\[
Q=\langle y_1,\ldots,y_d,t \mid r_1,\ldots,r_{s+1}\rangle
\]
is a presentation for $G$. 
\vskip 2mm

If $v$ is a reduced word in terms of elements in $\{y_1,\ldots, y_d,t\}$, then denote by $v'$ the word obtained from $v$ rewritten in terms of $\{x_1,\ldots, x_d\}$. Using this, define $\varphi_Q(v)$ to be $\varphi_P(v')$. Therefore $\phi\circ\varphi_Q(t)=1$ and $\phi\circ\varphi_Q(y_i)=0$ for all $i$, $1\leq i\leq d$. Moreover, given that $\phi\circ\varphi_Q(r_i)=0$ in $\mathbb{Z}$, then $\sigma_t(r_i)=0$, for all $i$, $1\leq i\leq s+1$.

\end{proof}
\vskip 2mm

\begin{rem}\label{rem: after lemma in inf abel case}
Let $u$ be a relator in the presentation $P$ from \cref{mapZ}. Suppose $u$ is a proper power relator, say $w^{n}=u$, with $n>1$. Denote by $r$ the relator in $Q$ corresponding to $u$ ($Q$ as in \cref{mapZ}). The relator $r$ is obtained by rewriting $u$ in terms of elements in $\{y_1,\ldots y_d,t\}$. This can be done by just rewriting $w$ and taking its $n$-th power. Call $w'$ the rewritten word of $w$ in terms of elements in $\{y_1,\ldots y_d,t\}$. Then $r=(w')^{n}$ and hence any $n$-th power relator from $P$ becomes an $n$-th power relator in $Q$.
\end{rem}

\vskip 2mm

\begin{rem}\label{rem: S}
Consider $P$, $G$ and $\phi$ as in \cref{mapZ}. Let $w\in F_d$ be a word written in terms of $x_1,\ldots,x_d$ and let $w'\in F_{d+1}$ be the word $w$ rewritten in terms of $y_1,\ldots,y_d,t$. Then, given how $y_1,\ldots,y_d,t$ are defined in terms of $x_1,\ldots,x_d$, $\Delta_{\phi}(w)=\Delta_{\phi}(w')$.
\end{rem}
\vskip 2mm

\begin{rem}
Note that the proof in \cref{mapZ} does not use the fact that there are a finite number of relators, we just add a generator and a relator, hence the statement holds for presentation of the type
\[
\langle x_1,\ldots,x_d \mid u_1,\ldots,u_{s},\ldots \rangle.
\]
\end{rem}
\vskip 2mm

In view of \cref{mapZ} and what we have discussed so far, we obtain
\vskip 2mm

\begin{thm}\label{thm: new G large with inf abel}
Let $G$ be defined by $P=\langle x_1,\ldots,x_n\mid w_1^{\alpha_1},\ldots,w_m^{\alpha_m},\ldots\rangle$, where $n>1$. Assume $\psi$ is a surjection from $G$ onto $\mathbb{Z}$ and suppose $\Delta_{\psi}(w_i)\leq K$, for all $i\in \mathbb{N}$, where $K$ is a fixed natural number. If at most $n-2$ powers $\alpha_m$ are not divided by a common prime number $p$, then $G$ is large.
\end{thm}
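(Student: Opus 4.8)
The plan is to reduce the statement to the situation already analysed in the discussion preceding the theorem, where Stohr's argument was shown to apply to presentations $P_1$ and $P_2$. The whole difficulty is bookkeeping: producing a presentation of $G$ in which one generator, call it $t$, has exponent sum zero in every relator, while keeping track of (i) how many relators fail to be proper $p$-th powers and (ii) the bound needed to rewrite each relator in terms of conjugates $t^{j}a_it^{-j}$ with $j$ ranging over a fixed finite interval.

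First I would apply \cref{mapZ} to the surjection $\psi\colon G\to\mathbb{Z}$. This yields a presentation $Q=\langle y_1,\ldots,y_n,t\mid r_1,\ldots,r_{m+1},\ldots\rangle$ of $G$ with one extra generator $t$ and one extra relator, in which $\psi$ sends $t\mapsto 1$ and every $y_i\mapsto 0$, so that $\sigma_t(r)=0$ for every relator $r$. By \cref{rem: after lemma in inf abel case}, each proper power relator $w_i^{\alpha_i}$ of $P$ becomes a proper power relator $(w_i')^{\alpha_i}$ of $Q$ with the same exponent $\alpha_i$; the only relator of $Q$ that is not such a proper power is the single new relator $r_{\mathrm{new}}$ arising from expressing $t$ as a word in the $x_i$.

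Next I would verify the two conditions that make Stohr's argument run. For the rewriting condition, \cref{rem: S} gives $\Delta_{\psi}(w_i')=\Delta_{\psi}(w_i)\le K$, while $r_{\mathrm{new}}$ is a single fixed word with a fixed finite value of $\Delta_{\psi}$; hence all relators of $Q$ have $\Delta_{\psi}$ bounded by some $K'\in\mathbb{N}$, and therefore, exactly as for $P_2$, each can be rewritten in terms of $t^{j}y_it^{-j}$ with $-K'\le j\le K'$. For the divisibility condition, note that $Q$ has $n+1$ generators, so Stohr's argument requires that at most $(n+1)-2=n-1$ relators fail to have a power divisible by $p$. By hypothesis at most $n-2$ of the $\alpha_i$ are not divisible by $p$, and these exponents are unchanged in $Q$; the only additional non-$p$-power relator is $r_{\mathrm{new}}$. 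This gives at most $(n-2)+1=n-1$ such relators, meeting the bound precisely.

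With both conditions in hand, $Q$ is a presentation of the type $P_2$ (with the role of $n$ played by $n+1$), so by the discussion preceding the theorem Stohr's arguments carry through: $G$ surjects onto an $HNN$ extension whose base is an elementary abelian $p$-group and which contains a finite index non-abelian free subgroup, whence $G$ is large. The step I expect to demand the most care is the cancellation in the previous paragraph: introducing $t$ raises the generator count by one and simultaneously raises the number of non-$p$-power relators by one (because of $r_{\mathrm{new}}$), and it is precisely the fact that these two increments cancel in Stohr's inequality ``at most (number of generators) $-2$'' that allows the hypothesis ``at most $n-2$ powers not divisible by $p$'' to survive the reduction intact.
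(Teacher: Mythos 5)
Your proposal is correct and follows essentially the same route as the paper: apply \cref{mapZ} (together with \cref{rem: after lemma in inf abel case} and \cref{rem: S}) to convert $P$ into a presentation of type $P_2$, and then invoke Stohr's argument. You are in fact more explicit than the paper about the key bookkeeping---the extra generator $t$ and the extra non-power relator $r_{\mathrm{new}}$ each add one, and these increments cancel exactly in Stohr's bound ``at most (number of generators) $-\,2$''---a point the paper's terse identification of the rewritten presentation with $P_2$ leaves implicit.
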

\begin{proof}
By \cref{mapZ}, $P_2$ is a presentation for $G$. By \cref{rem: S}, if $\Delta_{\psi}(w_i)\leq K$, then $\Delta_{\psi}(R_i)\leq K$ for all $i\in \mathbb{N}$. Therefore, $R_i$ for all $i\in \mathbb{N}$, can be rewritten in terms of $t^{j}a_it^{-j}$ where $-K\leq j\leq K$ and $i=1,\ldots,n-1$. By \cref{rem: after lemma in inf abel case}, each power $\alpha_m$ in $P$ becomes an power $\alpha_m$ in $P_2$. As at most $n-2$ of these powers do not have a common prime factor, then Stohr's arguments in \cite{Stohr} carry through to conclude $G$ is large.
\end{proof}
\vskip 2mm

\begin{cor}\label{thm: G large when inf abel and neg def}
Say $G$ is given by the presentation
\[
\langle x_1,\ldots,x_n \mid u^{m_1}_1,\ldots, u^{m_{s}}_{s}\rangle,
\]
where $n>1$. If $G$ has infinite abelianisation and at least $s-n+2$ relators 
are such that their powers have a common factor, then $G$ is large.
\end{cor}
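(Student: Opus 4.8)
The plan is to deduce this corollary directly from \cref{thm: new G large with inf abel} by checking that all of its hypotheses hold for the finite presentation at hand. I would set $w_i=u_i$ and $\alpha_i=m_i$, so that $G$ is presented in exactly the form required by the theorem. Since $G$ has infinite abelianisation, it admits a surjection $\psi:G\longrightarrow\mathbb{Z}$, which supplies the map to the integers demanded by the theorem; the hypothesis $n>1$ is given.

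Next I would verify the uniform bound on $\Delta_{\psi}$. Here the key point is that the presentation is \emph{finite}: there are only $s$ relators, so each $\Delta_{\psi}(u_i)$ is a well-defined natural number and I may simply take
\[
K=\max_{1\leq i\leq s}\Delta_{\psi}(u_i).
\]
This $K$ is a fixed natural number with $\Delta_{\psi}(u_i)\leq K$ for every $i$, so the hypothesis $\Delta_{\psi}(w_i)\leq K$ of \cref{thm: new G large with inf abel} is satisfied. (The theorem is stated so as to allow infinitely many relators, but in the finite case the existence of such a $K$ is automatic.)

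It then remains to translate the common-factor condition into the divisibility condition of the theorem. By assumption at least $s-n+2$ of the powers $m_i$ share a common factor exceeding one; let $p$ be any prime dividing that common factor. Then $p$ divides the powers of at least $s-n+2$ relators, so the number of powers \emph{not} divisible by $p$ is at most $s-(s-n+2)=n-2$. This is precisely the requirement that at most $n-2$ powers fail to be divisible by the common prime $p$. With every hypothesis verified, \cref{thm: new G large with inf abel} yields that $G$ is large.

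The argument is essentially bookkeeping, so I do not expect a genuine obstacle; the only points that require care are the interpretation of ``common factor'' as furnishing a common prime divisor (which needs that factor to be greater than one) and the elementary count confirming that the complementary set of relators has size at most $n-2$.
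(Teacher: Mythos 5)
Your proposal is correct and follows essentially the same route as the paper's own proof: both deduce the corollary from \cref{thm: new G large with inf abel} by noting that finiteness of the presentation gives a uniform bound $K$ on $\Delta_{\psi}(u_i)$, and that the common-factor hypothesis on at least $s-n+2$ powers is exactly the statement that at most $n-2$ powers avoid a common prime divisor. Your write-up is merely more explicit than the paper's (e.g.\ in extracting the surjection $\psi$ from the infinite abelianisation and the prime $p$ from the common factor), which is fine.
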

\begin{proof}
The condition that at least $s-n+2$ relators 
are such that their powers have a common factor is equivalent to having at most $n-2$ relators with powers which are not divisible by a common prime factor. As there are only a finite number of relators, then for all $u_i$, $\Delta_{\psi}(u_i)$ is bounded, where $\psi$ is a surjective map from $G$ to $\mathbb{Z}$.
\end{proof}

\vskip 2mm

\begin{ex}
Consider $F_2=\langle a,t\rangle$, the non-abelian free group of rank $2$, freely generated by $a$ and $t$. Consider the sequence of commutators
\[
r_{j,1}=[a,t^{j}], r_{j,2}=\big[[a,t^{j}],a\big], r_{j,3}=[r_{j,1},r_{j,2}],\ldots,r_{j,m}=[r_{j,m-1},r_{j,m-2}],
\]
where $j\geq 1$. Let $G_j$ be defined by
\[
\langle a,t\mid r_{j,1}^{p},\ldots,r_{j,m}^{p},\ldots\rangle.
\]
As $r_{j,i}^{p}$ is in the commutator subgroup of $F_2$, $G_j$ surjects onto $\mathbb{Z}$. Moreover, $\Delta(r_{j,i}^{p})\leq j$. Therefore, by \cref{thm: new G large with inf abel} $G_j$ is large for all $j\in\mathbb{N}$. 
\vskip 2mm

We do not know if the groups $G_j$ are finitely presented. We suspect, however, that they are not.
\vskip 2mm

\end{ex}

\begin{ex}
It is very easy to construct examples for which \cref{thm: G large when inf abel and neg def} applies. Consider the presentation 
\[
P=\langle x_1,\ldots,x_n \mid u^{m_1}_1, \ldots, u^{m_{s}}_{s}\rangle,
\]
and impose the condition $\sigma_{x_{i}}(u_j)=0$, for some $x_i$ and all $u_j$, $1\leq j\leq s$. Then, $P$ defines a group with infinite abelianisation. Finally, only $s-n+2$ powers with a non-trivial common factor are needed to apply \cref{thm: G large when inf abel and neg def}.
\end{ex}

\end{document}